\documentclass[draft,oneside,reqno]{amsart}

\usepackage{amsmath, amsthm, amsfonts,cite}
\usepackage{wrapfig}
\usepackage{amsfonts}
\usepackage{appendix}
\usepackage{amsmath}
\usepackage{amssymb}
\usepackage{stackrel}
\usepackage{color}

\newtheorem{theorem}{Theorem}[section]

\newtheorem{prop}[theorem]{Proposition}

\newtheorem*{remark}{Remark}

\newcommand{\lt}{\left}
\newcommand{\rt}{\right}
\newcommand{\bpm}{\begin{pmatrix}}
\newcommand{\epm}{\end{pmatrix}}
\newcommand{\bsm}{\lt(\begin{smallmatrix}}
\newcommand{\esm}{\end{smallmatrix}\rt)}
\newcommand{\beq}{\begin{equation}}
\newcommand{\eeq}{\end{equation}}
\newcommand{\bal}{\begin{align}}

\renewcommand{\d}{\mathrm{d}}

\newcommand{\du}{\mathrm{d}u}
\newcommand{\dx}{\mathrm{d}x}
\newcommand{\dy}{\frac{\mathrm{d}y}{y}}
\newcommand{\dz}{\frac{\mathrm{d}x \mathrm{d}y}{y^2}}

\newcommand{\Z}{\ensuremath{\mathbb{Z}}}

\newcommand{\Q}{\ensuremath{\mathbb{Q}}}
\newcommand{\R}{\ensuremath{\mathbb{R}}}
\newcommand{\C}{\ensuremath{\mathbb{C}}}

\renewcommand{\H}{\ensuremath{\mathbb{H}}}
\newcommand{\mt}{\mathbb}


\newcommand{\h}{\ensuremath{\mathfrak{h}}}

\newcommand{\vep}{\varepsilon}
\newcommand{\ep}{\epsilon}
\newcommand{\mf}{\mathfrak}
\newcommand{\re}{\mf{Re}\,}


\DeclareMathOperator{\SL}{SL}
\DeclareMathOperator{\Res}{Res}

\newcommand{\aquad}{\qquad\qquad}
\newcommand{\bquad}{\aquad\aquad}
\newcommand{\cquad}{\bquad\bquad}

\renewcommand{\th}{\textsuperscript{th}}
\newcommand{\inv}{^{-1}}
\newcommand{\hf}{\frac{1}{2}}
\newcommand{\qtr}{\frac{1}{4}}

\renewcommand{\Re}{\operatorname{Re}}
\renewcommand{\Im}{\operatorname{Im}}

\newcommand{\kt}{\frac{k}{2}}
\newcommand{\tkt}{\tfrac{k}{2}}
\newcommand{\thf}{\tfrac{1}{2}}
\newcommand{\mc}{\mathcal}

\usepackage[usenames,dvipsnames]{xcolor}

\usepackage[normalem]{ulem}
\title{Counting Square Discriminants}
\author{Thomas A. Hulse}
\thanks{Research of the first author was partially supported by a Coleman Postdoctoral Fellowship at Queen's University.}
\address{Thomas Hulse \newline 
\indent Colby College}
\email{tahulse@colby.edu}
\author{Chan Ieong Kuan}
\address{Chan Ieong Kuan \newline 
\indent University of Maine}
\email{chan.i.kuan@maine.edu}
\author{Eren Mehmet K\i ral}
\address{Eren Mehmet K\i{}ral \newline
\indent Texas A{}\&{}M University}
\email{ekiral@tamu.edu}
\author{Li-Mei Lim}
\address{Li-Mei Lim \newline 
\indent Bard College at Simon's Rock}
\email{llim@simons-rock.edu}
\date{\today}
\linespread{1.2}

\begin{document}
\maketitle
\begin{abstract}
Counting integral binary quadratic forms with certain restrictions is a classical problem. In this paper, we count binary quadratic forms of fixed discriminant given restrictions on the size of their coefficients. We accomplish this by investigating the analytic properties of a certain double Dirichlet series, which is a shifted convolution sum of certain classical automorphic forms.
\end{abstract}

\section{Introduction}

The aim of this paper is to answer an elementary question concerning the asymptotics of the number of integral binary quadratic forms with fixed positive discriminant. Specifically, we consider the discriminant $b^2 -4ac$ and look at the number of integer triples $(a,b,c)$ for which $b^2 - 4ac = h>0$, where $a,b,c$ are bounded in a box of size $X \gg h$ and $h$ is fixed. 

We see we have the relation
\beq
 \#\{(a,b,c) \in \Z_{>0}^3 : a,c \leq X \text{ and } b^2-4ac=h\} =\frac{1}{2} \sum_{a,c=1}^X \tau(4ac + h)  \label{targetSum3}
\eeq
where $\tau(n)$ is the square-indicator function,
\[
\tau(n) = \begin{cases} 1 &\text{if } n =0\\ 2&\text{if } n \text{ is a nonzero square} \\ 0 & \text{if } n \text{ is not a square}. \end{cases}
\]
Our first result concerns an asymptotic estimate of a smooth variant of this sum.
\begin{theorem}\label{boxtheorem}
For large $X \gg 1$, we have that
\[
\sum_{a,c=1}^\infty \tau(4ac + h)e^{-\left( \frac{a+c}{X}\right)}  = C_1(h)X\log X + C_2(h)X +O(X^{\frac{1}{2}+\vep}),
\]
where $C_1(h)$ and $C_2(h)$ are computable constants. Furthermore, if $h$ is a square then $C_1(h)\neq0$, otherwise $C_1(h)=0$ and $C_2(h)\neq 0$. 
\end{theorem}
\begin{remark}
	The factor $e^{-(a + c)/X}$ can be exchanged for $\psi_X(a)\psi_X(c)$ where $\psi_X$ is the characteristic function on $[1,X]$. Thus we obtain a sharp cutoff result described in \eqref{targetSum3} with the same main term and error term $O(X^{1-\delta})$ for $\delta>0$. Modifying the inverse Mellin integral in \eqref{mellin}, we can get $\delta = 5/39$. 
\end{remark}

The most similar result, which motivated the work in this paper, is due to Oh-Shah \cite{oh2011limits} who proved following theorem.
\begin{theorem}[Oh-Shah\cite{oh2011limits}]
For any non-zero square $h \in \Z$ there exists $C >0$ such that
\begin{align*}
&\# \{Q(x,y) = ax^2 + bxy + cy^2: a,b,c \in \Z, \operatorname{disc} (Q) = h,  a^2 + b^2 + c^2 \leq X^2\} \\
&\cquad \aquad \quad=  C X \log X + O(X (\log X)^{\frac 34}).
\end{align*}
\end{theorem}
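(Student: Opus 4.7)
The proof I would propose exploits the key algebraic feature of the square-discriminant case: since $h = m^2$, the identity $4a \cdot Q(x,y) = (2ax + (b-m)y)(2ax + (b+m)y)$ shows that every integral form of discriminant $m^2$ splits over $\Q$. Equivalently, the equation $b^2 - 4ac = m^2$ is the factorization $(b-m)(b+m) = 4ac$. This turns the counting problem into a shifted divisor sum: for each admissible value of $b$ one counts integer pairs $(a,c)$ with $4ac = b^2 - m^2$ subject to $a^2 + c^2 \leq X^2 - b^2$, and then sums over $b$ with $|b| \leq X$, $|b| \geq m$, and $b \equiv m \pmod{2}$.

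With $b$ fixed, the inner count is a weighted divisor count: the number of integer factorizations of $(b^2 - m^2)/4$ (with signs) in the prescribed disk, which is essentially $d\bigl((b^2-m^2)/4\bigr)$ up to boundary effects from the $\ell^2$-ball. Thus one expects
\[
N(X) \sim 2 \sum_{\substack{m \leq b \leq X \\ b \equiv m \,(\mathrm{mod}\, 2)}} d\bigl((b^2-m^2)/4\bigr) \cdot w_X(b),
\]
where $w_X(b)$ encodes the truncation imposed by the ball. To extract the main term I would appeal to the classical asymptotic for shifted divisor sums along polynomial sequences (of Ingham--Estermann--Hooley type), which yields an expansion $\sum_{n \leq N} d(n(n + 2m)) = N \cdot P(\log N) + O(N^{1-\delta})$ for a quadratic polynomial $P$. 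Integrating this against $w_X$, and changing variables $b = X u$, $a = X v$, produces a closed-form main term of the shape $c X \log X$, with $c$ given by an explicit integral over the portion of the unit ball cut out by $4ac = b^2$.

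The two main obstacles are the following. First, the $\ell^2$-constraint $a^2 + b^2 + c^2 \leq X^2$ is not a product region, so the truncation $w_X(b)$ couples the divisor sum in $a$ to the range of $b$; I would address this by writing the characteristic function of the ball as a Mellin--Barnes integral, reducing matters to a smooth weight where the shifted divisor asymptotics apply uniformly, and then shifting contours to recover the sharp cutoff. Second, the Oh--Shah error $O(X(\log X)^{3/4})$ is intrinsic to the effective equidistribution rates of the ergodic argument; to recover it by analytic means, one must bound the off-diagonal contributions to the shifted divisor sum, e.g. by realizing the generating Dirichlet series as a Rankin--Selberg convolution of Eisenstein series and using standard zero-free regions. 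The delicate step is the smooth-to-sharp comparison at the boundary of the ball, which is where one pays the $(\log X)^{3/4}$ loss; improving this loss is precisely what the shifted convolution machinery of the present paper is designed to accomplish.
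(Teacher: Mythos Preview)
The first thing to note is that the paper does \emph{not} prove this theorem. The Oh--Shah result is quoted from \cite{oh2011limits} as motivation; the present paper proves a different (though closely related) statement, Theorem~\ref{mainthm}, with a box cutoff $0<a,c\le X$ in place of the Euclidean ball and with a stronger error term $O_h(X^{1/2})$. So there is no ``paper's own proof'' of this statement to compare your proposal against.

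That said, your sketch is a plausible analytic route and is in spirit close to what the paper actually does for Theorem~\ref{mainthm}: both reduce to a shifted-divisor problem (your identity $(b-m)(b+m)=4ac$ is exactly the source of the $\sigma_{-v}$ in the paper's Dirichlet series \eqref{dseries}), and both extract the main term from the pole structure of a generating Dirichlet series. The paper realizes this series as an inner product \eqref{PVinnerProduct} of a half-integral weight Poincar\'e series against $\overline{\theta(z)E^*(4z,\tfrac{1+v}{2})}$, and reads off the $X\log X$ term from a double pole at $v=0$ in the continuous-spectrum contribution; your proposed route via classical Ingham--Estermann--Hooley asymptotics for $\sum d(n(n+2m))$ is a more elementary surrogate for the same mechanism.

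There is, however, an internal inconsistency in your error-term discussion. The divisor-sum inputs you cite (Ingham--Estermann, and more so Hooley) already give a \emph{power saving} in the shifted sum, and the smooth-to-sharp passage for a fixed compact region costs at most a power of $X$ coming from the boundary layer, not a fractional power of $\log X$. The $(\log X)^{3/4}$ in Oh--Shah is an artifact of the effective equidistribution rates in their ergodic argument, not something your analytic method would naturally reproduce; if your outline were carried through, you would expect an error of the shape $O(X^{1-\delta})$, as the paper itself remarks it can obtain for the sharp-cutoff count \eqref{sharp}. So the final paragraph of your proposal, which attributes the $(\log X)^{3/4}$ loss to the boundary comparison, misidentifies the source of the error and leaves the actual matching of error terms unexplained. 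As written, the proposal is a reasonable strategy memo for an \emph{alternative} proof yielding a different (and likely better) error, but it is not a proof of the stated theorem, nor does it correspond to anything the paper proves.
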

This result is obtained via a general ergodic technique. We note that our result has a second main term and a much improved error term. However, recent preliminary work of Oh and Shah produces an asymptotic more analogous to our smoothed sum in Theorem \ref{boxtheorem}.\cite{heemail}

Furthermore, there is a long history of study for asymptotics where the coefficients $a,c$ are bounded by the condition $ac\leq X^2$. In these cases we are investigating sums of the form
\beq
 \#\{(a,b,c) \in \Z_{>0}^3 : ac \leq X^2 \text{ and } b^2-4ac=h\} =\frac{1}{2} \mkern-15mu \sum_{\substack{a,c \\ 1 \leq ac \leq X^2}} \mkern-15mu \tau(4ac + h).  \label{targetSum2}
\eeq
When we count points in this hyperbolic-shaped region instead of the box shaped region in Theorem \ref{boxtheorem}, we get 

\begin{theorem}\label{hyperbolictheorem} For any non-zero $h \in \mathbb{Z}_{>0}$ and $X \gg h$,
\[
\sum_{\substack{a,c \\ 1 \leq ac \leq X^2}} \mkern-15mu \tau(4ac + h) = B_1(h)X(\log X)^2 + B_2(h)X\log X +B_3(h)X+O(X^{\frac{4}{5}+\vep}).
\]
where $B_1(h),B_2(h),B_3(h) $ are computable constants. Furthermore, $B_1(h)\neq0$ if $h$ is a square, and $B_1(h)=0$ and $B_2(h)\neq 0$ if $h$ is not a square.

\end{theorem}

These sums differ slightly from those in the literature, as more frequently sums of the form
\[
\sum_{b \leq X} \sigma_0(b^2-h) = \# \left\{b^2-ac=h \ | \ a,b,c \in \mathbb{Z}_{\geq 1}, \ ac \leq X^2-h \right\} 
\]
are investigated. For example, Bykovski\u{\i}\cite{bykovskii} proves the following theorem.
\begin{theorem}[Bykovski\u{\i}\cite{bykovskii}] For $h \in \mathbb{Z}_{<0}$ there exist constants $K_1(h),K_2(h)$ such that for $X \gg 1$, 
\[
\sum_{b \leq X} \sigma_0(b^2-h) = K_1(h)X\log X + K_2(h)X +O((X\log X)^{\frac{2}{3}}).
\]
and $K_1(h)$ and $K_2(h)$ have known formulas.
\end{theorem}
The above result is notable because this is best known error term. However, it is limited to the case where $h$ is negative and thus not square. There is a more general result due to Hooley\cite{hooley}:
\begin{theorem}[Hooley \cite{hooley}] For $h \in \mathbb{Z}$ not a square and $X \gg h$,
\[
\sum_{b \leq X} \sigma_0(b^2-h) = K_1(h)X\log X + K_2(h)X +O(X^{\frac{8}{9}+\vep}).
\]
\end{theorem}
This puts positive and negative $h$ on equal footing but does not deal with the case where $h$ is a square.  

We should also note that works due to Duke, Rudnick and Sarnak\cite{DRS} and Selberg \cite{LaxPhil} provide a general method that may produce results similar to the above estimates with improved error terms, but these applications do not appear with such specificity in the literature. As far as we know, our method is the only one that treats with the case where $h$ is and is not a square simultaneously and gives counts in both the hyperbolic and box-shaped geometries. While we do not investigate the case where $h$ is negative in this paper, our technique is the same and we can obtain similar results. 

Our approach to this problem uses the analytical properties of the double Dirichlet series,
\begin{equation}\label{initialDirichletSeries}
\sum_{a,c=1}^\infty \frac{\tau(4ac + h)}{a^s c^{w}},
\end{equation}
where $\tau$ is defined as before. This Dirichlet series can be obtained from the Petersson inner product
\begin{equation}\label{PVinnerProduct}
\langle P_{h,Y}^{-\hf}(*,s,\delta), \Im (*)^\qtr \overline{\theta E^*\!\lt(4*,\tfrac{1+v}{2}\rt) }\rangle
\end{equation}
 taken over the domain $\Gamma_0(4)\backslash \H$. The function $\overline{\theta(z)E^*\!\lt(4z,\tfrac{1+v}{2}\rt) (\Im z)^\qtr}$ will be denoted $\widetilde{V}_v(z)$ for brevity.  The  function $ P_{h,Y}^{-\hf}(z,s,\delta)$ is a Poincar\'{e} series, often just denoted $P$, and is inspired by a heuristic form given by
\begin{equation}\label{heuristicPoincare}
P_{h}^{-\hf}(z,s) = \sum_{\gamma \in \Gamma_\infty \backslash \Gamma_0(4)} \Im(\gamma z)^s e^{- 2\pi i h\gamma z} \frac{j(\gamma,z)}{|j(\gamma,z)|}.
\end{equation}
The letters $Y$ and $\delta$ appearing in $P_{h,Y}^{-\hf}(z,s,\delta)$ are auxiliary variables that will ensure good behavior as we describe below. This is not the standard real-analytic Poincar\'{e} series due to Selberg \cite{Selberg}, which we could use for $h$ negative, but analogous to a modified version originally constructed in \cite{Jeff} to deal with the case where $h$ is positive. 

The function $\widetilde{V}_v$ is an automorphic form of half integral weight on the space $\Gamma_0(4)\backslash \H$. In order to understand the analytic behavior of \eqref{initialDirichletSeries}, we use the spectral decomposition of the Poincar\'{e} series. This yields
\begin{align}
\langle P,\widetilde{V}_v\rangle &= \sum_{j} \langle P, u_j \rangle \langle u_j, \widetilde{V}_v\rangle + \langle P, \Im(*)^\qtr \overline{\theta}\rangle \langle \Im(*)^\qtr \overline{\theta}, \widetilde{V}_v\rangle \notag  \\
&\aquad\quad+ \sum_{\mathfrak{a} \text{ cusps}}\frac{\mathcal{V}}{4\pi}\int_{-\infty}^\infty \langle P, E(* ,\thf+it) \rangle \langle E_\mathfrak{a}(*,\thf+it), \widetilde{V}_v \rangle \  \d t. \label{poincareSpectral}
\end{align}
Here $\mathcal{V}$ is the volume of the hyperbolic surface $\Gamma_0(4) \backslash \H$.
By considering each term in the above expansion separately, we are able to find the locations of the poles and the residues of the meromorphic continuation of \eqref{initialDirichletSeries}.

The difficulties are threefold, two of which were overcome in \cite{Jeff}, where a similar Poincar\'{e} series of weight zero was defined and studied. Firstly, $P_h^{-\hf}$, as given in \eqref{heuristicPoincare}, is not an $L^2$ function and has exponential growth in $y$. Therefore, the spectral expansion does not exist, and we must work with a truncated function $P_{h,Y}(z,s)$ supported on a compact domain. After taking the inner product, we can take the limit 
\beq \label{innerp}
\lim_{Y\to \infty } \langle P_{h,Y}(*,s), \widetilde{V}_v\rangle 
\eeq
to recover the Dirichlet series. 

The second difficulty is that, although $P_{h,Y}(z,s)$ can be given a spectral decomposition, quantities such as $\langle P_{h,Y}(*,s), u_j\rangle$ do not make sense as $Y\to \infty$. That difficulty is overcome by introducing another auxiliary variable, $\delta>0$, and so the final form of the Poincar\'{e} series is given below in \eqref{poincare}. After letting $Y \to \infty$, convergence of the Dirichlet series occurs for $s$ in some right half-plane and in this region,  and so $\delta$ can be meaningfully sent to zero. Similarly, on the spectral side the expansion \eqref{poincareSpectral} converges when $\Re(s)$ sufficiently negative as $\delta \to 0$.  

The third difficulty does not have to do with the analytic complications of the Poincar\'{e} series, but is due to the fact that $\widetilde{V}_v$ is of moderate growth and thus not square-integrable. We overcome this by subtracting a modular form of the same weight, character and growth as $\widetilde{V}_v$. More specifically, we subtract linear combinations of Eisenstein series at various cusps, evaluated at specific values of the holomorphic variable such that the resulting difference is square-integrable.
 
In later sections of the paper, we use inverse Mellin transforms to get from the Dirichlet series to the truncated sum, and then move the lines of integration to obtain the desired asymptotic results. Since the methods for obtaining Theorems \ref{boxtheorem} and \ref{hyperbolictheorem} are very conceptually similar, save for some small differences, this paper will only show all of the computations used for obtaining the asymptotic of the smoothed sum in Theorem \ref{boxtheorem} and then give brief explanations for how to modify the argument to obtain our other estimates. 

%
%

\section*{Acknowledgments}
Most of this work was completed while the authors were at Brown University. We would like to thank Jeffrey Hoffstein at Brown University for bringing this problem to our attention and suggesting how we might approach it, as well as Min Lee at the University of Bristol for many helpful conversations. We would also like to thank Gergely Harcos at the Alfr\'{e}d R\'{e}nyi Institute of Mathematics for directing us toward the argument concerning the non-existence of exceptional eigenvalues given in Section \ref{spectral}.


\section{The Automorphic Function $V(z)$} \label{automorphic}

For notational convenience, let $\Gamma = \Gamma_0(4)$, the set of matrices in $\SL(2,\Z)$ which are upper triangular when reduced modulo four. Modular forms of weight $k$, for $k$ a half-integer, are functions $f$ on the upper half plane satisfying 
\[
f(\gamma z) = j(\gamma , z)^{2k} f(z),
\]
where $\gamma = \lt(\begin{smallmatrix} a&b\\c&d \end{smallmatrix}\rt) \in \Gamma$ and 
\[
j(\gamma,z) = \vep_d\inv \lt(\frac{c}{d}\rt) (cz+d)^\hf
\] 
is the weight $\hf$ cocycle, as in \cite{Shimura}.

Given a cusp $\mathfrak{a} \in \Q \cup \{\infty\}$, we denote the stabilizer of the cusp,
\[
\Gamma_\mathfrak{a} := \{\gamma \in \Gamma : \gamma \mathfrak{a} = \mathfrak{a}\}.
\]
For the group $\Gamma$, any cusp is equivalent to one of the three inequivalent cusps: $0, \hf,$ and $\infty$.

Let $E(z,w)$ be the standard real-analytic Eisenstein series on $\SL(2,\Z)$, and 
\[
E^*(z,w) := E(z,w) \zeta^*(2w)
\]
where $\zeta^*(w)$ is the completed Riemann zeta function,
\beq
\zeta^*(w)=\pi^{-\frac{w}{2}}\Gamma(\tfrac{w}{2})\zeta(w).
\eeq
Furthermore, for $k$ a half integer, let $E_{\mathfrak{a}}^k(z,w)$ be the weight $k$ Eisenstein series at the cusp $\mathfrak{a}$, as in \cite{goldfeld1985eisenstein}. Our notation differs somewhat from the notation of \cite{goldfeld1985eisenstein}, as we shift the position of the complex variable by replacing the $w$ with $w - \kt$. This amounts to using the normalized cocycle $j(\gamma,z)/|j(\gamma,z)|$ in the definition of the Eisenstein series instead of $j(\gamma,z)$ alone. Also, note that in \cite{goldfeld1985eisenstein} the weight is denoted by $\kt$ but throughout this work we use $k$.  

 Finally, recall that the classical theta series, $$\theta(z) = \sum_{n\in \Z} e^{2\pi in^2 z}= \sum_{n=0}^\infty \tau(n) e^{2\pi in z},$$
 is a modular form of weight $\hf$ and that its coefficients are the square indicator function. 

With this background in mind, we begin by fixing a positive integer $h$.
The multiple Dirichlet series,
\beq
\sum_{a,c=1}^\infty \frac{\tau(4ac + h)}{a^{s+v} c^{s}}  \label{dseries},
\eeq
 which is obtained via an inverse Mellin transform, will aid us in estimating the desired asymptotics in Theorems \ref{boxtheorem} and \ref{hyperbolictheorem}. 
Letting $m = ac$, we can rewrite the above Dirichlet series as 
\[
\sum_{m=1}^\infty  \frac{\tau(4m+h)}{m^s}\sum_{a|m} \frac{1}{a^v} = \sum_{m=1}^\infty \frac{\tau(4m+h)\sigma_{-v}(m)}{m^s}.
\]
We will use the Fourier coefficients of Eisenstein series of level 1 and weight 0 on $\SL(2,\Z)\backslash \H$ as a source for the divisor function, $\sigma_{-v}(m)$. Thus what we have is a shifted convolution sum of the Fourier coefficients of the theta function and those of the Eisenstein series. 

This shifted convolution is obtained by taking the inner product of the function
\begin{align}
\overline{V_v(z)} &= y^\qtr\theta(z)E^*\lt(4z,\tfrac{1+v}{2}\rt) \label{vdef} \\
&\qquad\phantom{=}- 4^{\frac{1+v}{2}} \zeta^*(1+v) E_{\infty}^\hf \lt(z,\tfrac{3}{4} + \tfrac{v}{2}\rt) - \zeta^*(1+v) E_{0}^\hf (z,\tfrac{3}{4} + \tfrac{v}{2}) \notag
\end{align}
 with the Poincar\'{e} series alluded to in the introduction. By subtracting the Eisenstein series in \eqref{vdef} we ensure that $V_v(z)$ is indeed in $L^2(\Gamma \backslash \H)$. Let
\begin{equation}\label{poincare}
P_{h,Y}^{-\hf}(z,s;\delta) =\!\!\! \sum_{\gamma \in \Gamma_\infty \backslash \Gamma} \Im(\gamma z)^s \Psi_Y(\Im\gamma z) e^{-2\pi i h \Re(\gamma z)}e^{2\pi h \Im (\gamma z) (1-\delta) } \frac{j(\gamma,z)}{|j(\gamma,z)|}
\end{equation}
be the aforementioned Poincar\'{e} series, where  $\Psi_Y$ is the characteristic function for the interval $[Y\inv,Y]$. Again, note for brevity that we will often denote $P_{h,Y}^{-\hf}(z,s;\delta)$ as simply $P$. This Poincar\'{e} series has been constructed to pick out the $h$\th\ term from the Fourier expansion of an automorphic function of weight $\hf$. The factor of $j(\gamma,z)$s in the sum make $P$ itself, as a function of $z$, an automorphic function of weight  $-\hf$. Without $\Psi_Y$, the series as a function of $z$ would be growing exponentially in the imaginary part $y$ of $z$, which would have made it difficult to exploit the spectral decomposition of the function $P$. Right now, it is compactly supported, and hence in $L^2(\Gamma \backslash \H, -\hf)$.


\section{The Dirichlet Series}

In this section we will demonstrate that the inner product of $P$ and first term of $V_v$ produces a Dirichlet series of the form we desire.  As in the Rankin-Selberg convolution, we begin by unfolding the inner product given in \eqref{innerp}:
\begin{align}
\notag &\lt\langle P, \overline{\Im(*)^\qtr\theta E^*\lt(4*,\tfrac{1+v}{2}\rt)} \rt\rangle
\notag = \iint\limits_{Y^{\!-1} \,  0}^{\ \ \   Y  \ \ 1} y^{s-\frac{3}{4}}e^{-2\pi i h x}e^{2\pi h y (1-\delta)} \theta(z) E^*\lt(4z,\tfrac{1+v}{2}\rt)\frac{\mathrm{d}x \mathrm{d}y}{y} \displaybreak[0] .
\end{align}
Substituting the Fourier expansions for $\theta(z)$ and the Eisenstein series in the inegrand, the inner product becomes
\begin{align}
& \notag  \iint\limits_{Y^{\!-1} \,  0}^{\ \ \   Y  \ \ 1} y^{s-\frac{3}{4}}e^{-2\pi i h x}e^{2\pi h y (1-\delta)}\lt( \sum_{n=0}^\infty \tau(n)e^{2\pi i n z} \rt) \lt( \sum_{n=0}^\infty \overline{a_v(n,4y)}e^{-2\pi i n(4 x)} \rt) \tfrac{\dx \d y}{y} \displaybreak[0] ,
\end{align}
where 
$$
a_v(n,y) = \lt\{ \begin{array}{ll} \zeta^*(1+v)y^{\hf+\frac{v}{2}}+\zeta^*(v)y^{\hf-\frac{v}{2}} & \mbox{ if } n=0 \\
 2y^\hf\sigma_{-v}(n)|n|^{\frac{v}{2}} K_{\frac{v}{2}}(2\pi |n| y) & \mbox{ if } n \neq 0,
\end{array}
\rt.
$$
and $K_v(y)$ is the classical $K$-Bessel function.
 
Therefore we obtain the expansion
\begin{subequations} \label{unfold}
\begin{align}
\notag &\lt\langle P, \overline{\Im(*)^\qtr\theta E^*\lt(4*,\tfrac{1+v}{2}\rt)} \rt\rangle \\
&  =\tau(h)4^{\frac{3}{4}-s}\int_{Y^{-1}}^Y e^{-2\pi y \delta h} (4y)^{s-\qtr}\lt(\zeta^*(1+v)(4y)^{\frac{v}{2}}+\zeta^*(v)(4y)^{-\frac{v}{2}}\rt) \dy \label{unfold1} \\
& \ + 4\sum_{m=1}^{\lfloor \frac{h}{4}\rfloor} \tau(h-4m)\sigma_{-v}(m)|m|^{\frac{v}{2}} \int_{Y^{-1}}^Y y^{s-\qtr} e^{2\pi y(4m-\delta h )} K_{\frac{v}{2}}(2\pi |4m| y) \dy \label{unfold2} \\
&\label{dirichlet1} \  + 4\sum_{m=1}^{\infty} \tau(h+4m)\sigma_{-v}(m)|m|^{\frac{v}{2}} \mkern-9mu\int_{Y^{-1}}^Y \! y^{s-\qtr} e^{-2\pi y(4m+\delta h )} K_{\frac{v}{2}}(2\pi |4m| y) \dy. 
\end{align}
\end{subequations}
 We see that by taking the respective limits of $Y \to \infty$ and $\delta \to 0$ when $\Re(s)$ is sufficiently large,  \eqref{dirichlet1} becomes
 \begin{align}
 \frac{(16\pi)^{\frac{3}{4}-s}\Gamma(s-\qtr+\frac{v}{2})\Gamma(s-\qtr-\frac{v}{2})}{\Gamma(s+\qtr)}  \sum_{a,c=1}^\infty \frac{\tau(h+4ac)}{a^{s+\frac{v}{2}-\frac{1}{4}}c^{s-\frac{v}{2}-\qtr} }, \label{DirichletSeries}
 \end{align}
which we see is the multiple Dirichlet series we want to study.

Now, when $\Re s > \qtr+|\Re(\tfrac{v}{2})|$ we can take the limit as $Y$ goes to infinity and \eqref{unfold1} becomes
\begin{align*}
\tau(h)4^{\frac{3}{4}-s} \left((\tfrac{\delta h \pi}{2})^{\frac{1}{4} - s - \frac{v}{2}} \zeta^*(1\!+\! v) \Gamma(s\!-\!\tfrac{1}{4}\!+\!\tfrac{v}{2}) + (\tfrac{\delta h \pi}{2})^{\frac{1}{4} - s + \frac{v}{2}} \zeta^*(v) \Gamma(s\!-\!\tfrac{1}{4}\!-\!\tfrac{v}{2}) \right),
\end{align*}
which has a meromorphic continuation to all $s \in \mt{C}$. Similarly, when $\Re s > \qtr+|\Re(\tfrac{v}{2})|$, \eqref{unfold2} becomes
\begin{align*}
& 4\sum_{m=1}^{\lfloor \frac{h}{4}\rfloor} \frac{\tau(h-4m)\sigma_{-v}(m)|m|^{\frac{v}{2}}}{(8\pi m)^{s-\frac{1}{4}}} \int_0^\infty y^{s-\frac{1}{4}} e^{y(1- \frac{\delta h }{4m})} K_{\frac{v}{2}}(y) \dy \\
 & \ = 4\sum_{m=1}^{\lfloor \frac{h}{4}\rfloor} \frac{\tau(h-4m)\sigma_{-v}(m)|m|^{\frac{v}{2}}}{(8\pi m)^{s-\frac{1}{4}}} \sqrt{\frac{\pi}{2}}M_0(s+\tfrac{1}{4}, \tfrac{v}{2i}, \tfrac{\delta h}{4m}),
\end{align*}
as $Y \to \infty$. The function $M_k(s, z/i, \delta)$ has a meromorphic continuation to all $(s,z) \in \mt{C}^2$ for fixed $k \in \mt{R}$; its general definition and other relevant properties are due to \cite{Tom} and are summarized in Proposition \ref{props} in the appendix. Both \eqref{unfold1} and \eqref{unfold2} contribute poles at $s = \frac{1}{4} \pm \frac{v}{2}$. 

In order to understand the inner product of $P$ with $V_v$, where $V_v$ is as described in \eqref{vdef}, we also need to examine the inner product of $P$ with the half-integral weight Eisenstein series at the infinity cusp, 
\begin{align}
\notag &\lim_{Y \rightarrow \infty} \langle P, \overline{\zeta^*(1+v) E^\hf_{\infty}(*,\tfrac{3}{4} + \tfrac{v}{2})} \rangle\\
&\notag = \int_0^\infty \int_0^1 y^{s-1} e^{2\pi hy(1-\delta)} e^{-2\pi ihx} \zeta^*(1+v) E^\hf_{\infty}(z,\tfrac{3}{4} + \tfrac{v}{2}) \frac{\,dx \,dy}{y} \\
& = i^{\hf} \frac{\pi^{\frac{3}{4} +\frac{v}{2}} h^{\frac{v}{2}-\frac{1}{4}}}{\Gamma(1+\tfrac{v}{2})} \zeta^*(1+v) D_\infty(\tfrac{3}{4} + \tfrac{v}{2};h) (2\pi h)^{1-s} M_\hf (s,\tfrac{1}{i}(\tfrac{1}{4} + \tfrac{v}{2}), \delta),
\end{align}
and at the zero cusp,
\begin{align}
\notag &\lim_{Y \rightarrow \infty} \langle P, \overline{\zeta^*(1+v) E^\hf_{0}(*,\tfrac{3}{4} + \tfrac{v}{2})} \rangle\\
&\notag = \int_0^\infty \int_0^1 y^{s-1} e^{2\pi hy(1-\delta)} e^{-2\pi ihx} \zeta^*(1+v) E^\hf_{0}(z,\tfrac{3}{4} + \tfrac{v}{2}) \frac{\,dx \,dy}{y} \\
& = i^{\hf} \frac{\pi^{\frac{3}{4} +\frac{v}{2}} h^{\frac{v}{2}-\frac{1}{4}}}{\Gamma(1+\tfrac{v}{2})} \zeta^*(1+v) D_0(\tfrac{3}{4} + \tfrac{v}{2};h) (2\pi h)^{1-s} M_\hf (s,\tfrac{1}{i}(\tfrac{1}{4} + \tfrac{v}{2}), \delta).
\end{align}
Here 
\beq
D_{\infty}(s;h)= \sum_{n=1}^\infty \frac{g_h(4n)}{(4n)^s}
\eeq
is the $h$\th\ Fourier coefficient of $E^\hf_\infty(z,s)$, where $g_h(c)$ denotes the Gauss sum,
\beq
g_h(c) = \sum_{\substack{d \ (c) \\ d \mbox{  \tiny odd}}} \varepsilon_d \lt(\frac{c}{d} \rt) e^{\frac{2\pi i d h}{c}}.
\eeq
 Similarly $D_{\mathfrak{a}}(s;h)$ is the $h$\th\ Fourier coefficient of $E^\hf_\mathfrak{a}(z,s)$ and only differs from $D_\infty(s;h)$ in the $2$-place. This is described more fully in \cite{goldfeld1985eisenstein}.  In particular, in Corollary 1.3 of \cite{goldfeld1985eisenstein}, it is shown that 
\begin{equation}\label{eq:FourierCoefficientEisenstein}
	D_\mathfrak{a}(s;h)=L^*(2s-\tfrac12, \lt(\tfrac{4h}{\cdot}\rt)) F_\mathfrak{a}(s;h)
\end{equation}
where $F_\mathfrak{a}(s;h)$ is a function which is analytic for $\Re(s)\geq\hf$.  

So to summarize, we have proven the following proposition.
\begin{prop} \label{summaryprop1} The function
\beq
\mathfrak{D}_v(s;h,\delta):=\frac{4}{(8\pi)^{s-\qtr}}\sum_{a,c=1}^{\infty}  \frac{\tau(4ac+h)}{a^{s+\frac{v}{2}-\frac{1}{4}}c^{s-\frac{v}{2}-\frac{1}{4}}}  \int_0^\infty \! y^{s-\qtr} e^{- y(1+\frac{\delta h}{4m} )} K_{\frac{v}{2}}( y) \dy \label{DY}
\eeq
is absolutely convergent for $\Re s > \frac{5}{4}+|\Re(\frac{v}{2})|$ and has the expansion
\begin{subequations}
\begin{align}
\mathfrak{D}_v&(s;h,\delta) \\
& =\lim_{Y \to \infty} \lt\langle P_{h,Y}(*,s;\delta), V_v \rt\rangle  \label{Yapprox}  \\
& \ \  - 4\sum_{m=1}^{\lfloor \frac{h}{4}\rfloor} \frac{\tau(h-4m)\sigma_{-v}(m)m^{\frac{v}{2}}}{(8\pi m)^{s-\frac{1}{4}}} \sqrt{\frac{\pi}{2}}M_0(s+\tfrac{1}{4}, \tfrac{v}{2i}, \tfrac{\delta h}{4m}) \label{Yc} \\  
& \ \ - \frac{\tau(h)(\tfrac{\delta h \pi}{2})^{\frac{1}{4} - s-\frac{v}{2}} }{4^{s-\frac{3}{4}}} \left(\zeta^*(1\!+\! v) \Gamma(s\!-\!\tfrac{1}{4}\!+\!\tfrac{v}{2}) + (\tfrac{\delta h \pi}{2})^{v} \zeta^*(v) \Gamma(s\!-\!\tfrac{1}{4}\!-\!\tfrac{v}{2}) \right)\label{Yd} \\
& \ \ +  i^{\hf} \frac{\pi^{\frac{3}{4} +\frac{v}{2}} h^{\frac{v}{2}-\frac{1}{4}}}{\Gamma(1+\tfrac{v}{2})} \zeta^*(1+v) D_\infty(\tfrac{3}{4} + \tfrac{v}{2};h) (2\pi h)^{1-s} M_\hf (s,\tfrac{1}{i}(\tfrac{1}{4} + \tfrac{v}{2}), \delta) \label{Ye} \\
& \ \ +  i^{\hf} \frac{\pi^{\frac{3}{4} +\frac{v}{2}} h^{\frac{v}{2}-\frac{1}{4}}}{\Gamma(1+\tfrac{v}{2})} \zeta^*(1+v) D_0(\tfrac{3}{4} + \tfrac{v}{2};h) (2\pi h)^{1-s} M_\hf (s,\tfrac{1}{i}(\tfrac{1}{4} + \tfrac{v}{2}), \delta). \label{Yf}
\end{align}
\end{subequations}
in this region.
\end{prop}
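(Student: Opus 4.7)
The proposition is essentially a bookkeeping statement that repackages the computations performed earlier in the section, so my plan is to assemble the pieces and then isolate $\mathfrak{D}$ algebraically. Starting from $\lim_{Y\to\infty}\langle P,V_v\rangle$, I would expand by linearity using the three-term definition \eqref{vdef} of $V_v$ into a ``main'' inner product with $\overline{\Im(*)^\qtr\theta E^*(4*,\tfrac{1+v}{2})}$ and two inner products with the subtracted half-integral weight Eisenstein series at the cusps $\infty$ and $0$. For the main piece, the unfolding \eqref{unfold} produces the three terms \eqref{unfold1}, \eqref{unfold2}, \eqref{dirichlet1}; for the two Eisenstein pieces, the displayed computations immediately preceding the proposition provide the explicit $M_{\hf}$-values. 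I would then take $Y\to\infty$ in each summand and solve for the unknown piece \eqref{dirichlet1}, which---after a change of variables $u=8\pi my$ in the Bessel integral and the identity $\sigma_{-v}(m)m^{v/2}=\sum_{ac=m}(c/a)^{v/2}$---coincides with $\mathfrak{D}(s;h,\delta)$. The resulting identity is exactly the stated decomposition, the signs being accounted for by the minus signs attached to the Eisenstein pieces inside $V_v$.

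For the convergence claim, I would bound the outer double sum by $2\zeta(\Re s+\Re(v/2)-\qtr)\zeta(\Re s-\Re(v/2)-\qtr)$ using $\tau(n)\le 2$; this converges precisely when $\Re s>\tfrac54+|\Re(v/2)|$. The inner $y$-integral is a Bessel--exponential integral whose absolute convergence is controlled by $K_{v/2}(y)\ll y^{-|\Re(v/2)|}$ near zero and by the exponential decay of both $K_{v/2}$ and the factor $e^{-y(1+\delta h/(4m))}$ at infinity; it converges in the strictly larger range $\Re s>|\Re(v/2)|-\tfrac34$. The Dirichlet-series condition is therefore binding and yields exactly the stated region of absolute convergence.

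The main technical point is justifying the termwise $Y\to\infty$ limit and the interchange of the resulting sum and integral in \eqref{dirichlet1}. For every finite $Y$ the truncation by $\Psi_Y$ keeps everything compactly supported and absolutely convergent, so Fubini validates the unfolded Fourier expansion without issue. Dominated convergence on the finite ranges handles \eqref{unfold1} and \eqref{unfold2}, while for \eqref{dirichlet1} the majorant obtained by inserting absolute values inside the integrand is already summable and integrable throughout the region $\Re s>\tfrac54+|\Re(v/2)|$, so monotone convergence licenses both the passage $Y\to\infty$ and the swap of sum and integral. With these justifications in hand, the assembly described above produces the proposition.
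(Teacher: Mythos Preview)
Your proposal is correct and follows exactly the paper's route: the proposition is merely the algebraic rearrangement of the unfolded inner products already computed in the section, and you have assembled and isolated $\mathfrak{D}$ in the same way the paper does. One harmless slip: the Bessel integral actually needs $\Re s > \tfrac14 + |\Re(\tfrac{v}{2})|$ rather than $\Re s > |\Re(\tfrac{v}{2})| - \tfrac34$ (recheck the power of $y$ near zero, remembering that the paper's $\dy$ is $dy/y$), but your conclusion that the Dirichlet-series bound $\Re s > \tfrac54 + |\Re(\tfrac{v}{2})|$ is the binding constraint is unaffected.
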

Since \eqref{DY} converges to \eqref{DirichletSeries} as $\delta \to 0$, in the next section we will use this proposition and the inverse Mellin transform of \eqref{Yapprox} to relate the asymptotic expansion of the smoothed sum in Theorem \ref{boxtheorem} to the analytic properties of $\lt\langle P, V_v \rt\rangle$. 


\section{The inverse Mellin integrals}\label{4section}
To get an asymptotic estimate of
\beq
\sum_{a,c=1}^\infty \tau(h+4ac)e^{-(\frac{a+c}{X})}, \label{asymptotic}
\eeq
we take an inverse Mellin transform of the equation in Proposition \ref{summaryprop1}. The problem will be reduced to analyzing the inner product in \eqref{Yapprox}, which is accomplished in the next section via its spectral expansion.

\begin{prop}
For large $X \gg 1$, we have that
\beq
 \sum_{a,c=1}^\infty \tau(h + 4ac) e^{-\frac{a+c}{X}}=I - cX\log X - dX + O(X^{\hf+\vep}) \label{asymptote1}
\eeq
where $c$ and $d$ are constants that depend on $h$, with $c\neq 0$ if $\tau(h)\neq0$ and $d \neq 0$ if $\tau(h)=0$,  and $I$ is defined as
\beq
I:= \left(\frac{1}{2\pi i}\right)^2 \iint\limits_{(\frac 32)(3)} \lim_{Y \to \infty} \langle P, V_{w_1-w_2} \rangle \Gamma \lt(\tfrac{w_1+w_2+1}{2}\rt) (16\pi)^{\frac{w_1+w_2-1}{2}} X^{w_1+w_2} \,dw_1 \,dw_2. \label{mellin}
 \eeq

\end{prop}

\begin{proof}
Let $w_1 = s-\qtr+\frac{v}{2}$ and $w_2 = s-\qtr-\frac{v}{2}$. 
From \eqref{DirichletSeries}, we have that
\begin{align}
  &\sum_{a,c=1}^\infty \tau(h + 4ac) e^{-\frac{a+c}{X}} \notag \\
  =&\lim_{\delta \to 0} \left(\frac{1}{2\pi i}\right)^2 \mkern-9mu \iint\limits_{(\frac 32)(3)}\mathfrak{D}_{w_1-w_2}(\tfrac{w_1+w_2+\hf}{2};h,\delta) \frac{\Gamma \lt(\frac{w_1+w_2+1}{2}\rt)}{(16\pi)^{\frac{1-w_1-w_2}{2}}}  X^{w_1+w_2} \,dw_1 \,dw_2.  \label{mellin2}
\end{align}
Using the expansion of $\mathfrak{D}_{w_1-w_2}(\frac{w_1+w_2+\hf}{2};h,\delta)$ given in Proposition \ref{summaryprop1} we have that 
\beq
\sum_{a,c=1}^\infty \tau(h + 4ac) e^{-\frac{a+c}{X}} = \lim_{\delta \to 0}(I-T_1-T_2+T_{3,\infty}+T_{3,0})
\eeq
where 
\begin{align*}
  T_1 :=&\left(\frac{1}{2\pi i}\right)^2 \mkern-9mu \iint\limits_{(\frac 32)(3)}\! \frac{\tau(h)(4\pi)^{\frac{w_1+w_2-1}{2}}}{ (\frac{\delta h \pi}{2})^{w_1}} \zeta^*(1+w_1-w_2) \Gamma(w_1) \\
    &\mkern350mu \times \Gamma  \lt(\tfrac{w_1+w_2+1}{2} \rt) \! X^{w_1+w_2} \, dw_1 \,dw_2 \\
  & + \left(\frac{1}{2\pi i}\right)^2 \mkern-9mu \iint\limits_{(\frac 32)(3)} \frac{\tau(h)(4\pi)^{\frac{w_1+w_2-1}{2}}}{ (\frac{\delta h \pi}{2})^{ w_2}} \zeta^*(w_1-w_2) \Gamma(w_2) \\
    &\mkern350mu \times \Gamma \lt(\tfrac{w_1+w_2+1}{2} \rt) X^{w_1+w_2} \,dw_1 \,dw_2
\end{align*}
is due to \eqref{Yd},
\begin{align*}
 T_2 \! :=\! \! \left(\frac{1}{2\pi i}\right)^{\! 2} \iint\limits_{(\frac 32)(3)} &\frac{\Gamma \! \lt(\tfrac{w_1+w_2+1}{2} \rt) \!}{2^{\frac{1-w_1-w_2}{2}}} \! \sum_{m=1}^{\lfloor \frac{h}{4}\rfloor} \! \frac{\tau(h-4m)\sigma_{w_2-w_1}(m)}{m^{w_2}} \\
   &\mkern150mu \times M_0(\tfrac{w_1+w_2+1}{2}, \tfrac{w_1-w_2}{2i}, \tfrac{\delta h}{4m})  X^{w_1+w_2} dw_1 \,dw_2
\end{align*}
is due to \eqref{Yc}, and 
\begin{align}
  &T_{3,\mathfrak{a}}:= \\
  &\left(\frac{1}{2\pi i}\right)^2 \mkern-9mu \iint\limits_{(\frac 32)(3)}  i^{\hf} \frac{\pi^{\frac{3}{4} +\frac{w_1-w_2}{2}} h^{\frac{w_1-w_2}{2}-\frac{1}{4}}}{\Gamma(1+\tfrac{w_1-w_2}{2})} \zeta^*(1+w_1-w_2) D_\mathfrak{a}(\tfrac{3}{4} + \tfrac{w_1-w_2}{2};h) \Gamma(\tfrac{w_1+w_2+1}{2})  \notag 
  \\ & \mkern 80 mu
   \times \frac{(2\pi h)^{1-\frac{w_1+w_2+\hf}{2}}}{(16\pi)^{\frac{1-w_1-w_2}{2}}} M_\hf (\tfrac{w_1+w_2+\hf}{2},\tfrac{1}{i}(\tfrac{1}{4} + \tfrac{w_1-w_2}{2}), \delta) X^{w_1+w_2} \,dw_1 \,dw_2 \notag
\end{align}
is due to \eqref{Ye} and \eqref{Yf}. 

For $T_1$, we will deal with the two terms separately. For the first integral of $T_1$, we shift the line of integration to $\Re(w_2)=-\hf$ without passing over poles. We can then shift the line of integration to $\Re(w_1) = -\vep$, passing the simple pole at $w_1 = 0$. The residue at $w_1=0$ is
\begin{align*}
\frac{1}{2\pi i} \int\limits_{(-\frac 12)} \tau(h)(4\pi)^{\frac{w_2-1}{2}} \zeta^*(1-w_2) \Gamma(\tfrac{w_2+1}{2}) X^{w_2} \,dw_2,
\end{align*}
which is $O(X^{-\hf})$. We can take $\delta$ to $0$ in the shifted integral, effectively dropping the term.

For the other term of $T_1$, we will move the line of integration for $w_2$ left to $\Re(w_2)=-\hf$, picking up the residue at $w_2=0$. The shifted integral term can be dropped when $\delta$ is taken to $0$ and the residue at $w_2 = 0$ is
\[ \frac{1}{2\pi i} \int_{(3)} \tau(h)(4\pi)^{\frac{w_1-1}2} \zeta^*(w_1) \Gamma \lt(\tfrac {w_1+1}2 \rt) X^{w_1} \,dw_1 = \tau(h) X + O(X^\hf), \]
This equality is obtained by moving the line of integration to $\Re(w_1) = \hf$.

For $T_2$, moving lines of integration for $w_1$ left to $\Re(w_1) = \vep$ and for $w_2$ left to $\Re(w_2) = \vep$, we can take $\delta$ to $0$ and the resulting expression is $O(X^{\vep})$.

For $T_{3,\mathfrak{a}}$ we move the line of integration for $w_2$ to $\Re w_2 = \varepsilon$, encountering the simple pole at $w_2=\hf$. Shifting line of integration of $w_1$ to $\Re w_1 = \hf+\vep$ allows us to take $\delta$ to $0$ and shows that the moved integral is $O(X^{\hf+\vep})$. The residue at $w_2 = \hf$ is
\begin{align*}
  \frac{1}{2\pi i} \int\limits_{(3)} \frac{2i^{\hf}\sqrt{\pi}}{\Gamma(\tfrac{3}{4}+\frac{w_1}{2})} \zeta(\tfrac12+w_1) D_\mathfrak{a}(\tfrac12 + \tfrac{w_1}{2};h) X^{w_1+\hf} \,dw_1.
\end{align*}
Shifting the line of integration of the above to $\Re w_1 = \vep$, we pick up a pole at $w_1=\hf$. If $h$ is not a square, then the pole at $w_1=\hf$ is simple and comes from the Riemann zeta function and we have
\[ \lim_{\delta \to 0} (T_{3,\infty} + T_{3,0}) = c_2 X + O(X^{\hf+\vep}), \]
for some nonzero, computable constant $c_2$. If $h$ is a square, then $D_{\mathfrak{a}}$ also has a pole (see \eqref{eq:FourierCoefficientEisenstein}), and the pole at $w_1=\hf$ is a double pole instead of a simple one, and  we obtain:
\[ \lim_{\delta \to 0} (T_{3,\infty} + T_{3,0}) = c_3 X\log X + c_4 X + O(X^{\hf+\vep}) \]
where in the above, $c_3 \neq 0$ and $c_4$ are computable constants.
\end{proof}


\section{Spectral Expansion} \label{spectral}

The function $P_{h,Y}^{-\hf} (z,s;\delta) $ is in $L^2(\Gamma_0(4)\backslash \H, -\hf)$; in fact it is compactly supported, so we can expand this function in the spectrum of the Laplacian $\Delta_{-\hf}$. Let the $u_j$ be an orthonormal basis of Maass forms i.e.\ eigenfunctions of the Laplacian which vanish at the cusps. We parametrize the eigenvalues of these Maass forms as $1/4 + t_j^2$.
Each Maass form has the Fourier expansion 
\[
u_j(z) = \sum_{n\neq 0} \rho_j(n) |n|^{-\hf} W_{-\frac{\operatorname{sgn}(n)}{4}, it_j}(4\pi |n| y) e^{2\pi i n x},
\]
where $K_{\kappa,\mu}(y)$ is the classical Whittaker function. 

The continuous spectrum of $L^2(\Gamma_0(4)\backslash \H, -\hf)$ is spanned by the Eisenstein series at various cusps, $E_{\mathfrak{a}}^{-\hf}(z,\hf+it)$, defined for $\Re{u}>1$ by 
\[
E_{\mathfrak{a}}^{-\hf}(z,u) = \sum_{\Gamma_\mathfrak{a} \backslash \Gamma_0(4)} (\Im \sigma_{\mathfrak{a}}\inv \gamma z)^u \frac{j(\gamma,z)}{|j(\gamma,z)|}.
\]
This Eisenstein series has a meromorphic continuation to the whole complex plane with a pole at $u = \frac 34$, and 
\[
\Res_{u = \frac 34} E^{-\hf}_\mathfrak{a}(z,u) = c_{\mathfrak{a}}\overline{\theta(z)} y^\qtr
\]
is the residual spectrum, where $c_\mathfrak{a}$ is some constant. From this we can take the spectral expansion of  $P$:
\begin{align}
P_{h,Y}^{-\hf}(z,s;\delta) 
&=\sum_{j} \langle P,u_j\rangle u_j (z) 
+ \langle P, \Im(*)^\qtr\overline{\theta} \rangle y^\qtr\theta(z)  \notag \\
&+ \frac{\mathcal{V}}{4\pi i} \sum_{\mathfrak{a}= \infty,0, \hf} \int_{(\hf)} \langle P,E_{ \mathfrak{a}}^{-\hf}(*,u) \rangle E_{\mathfrak{a}}^{-\hf}(z,u) \du \label{spectralP}.
\end{align}

Now we see what each of these inner products is
\begin{align}
\langle P, u_j\rangle &= \iint_{\Gamma_0(4)\backslash \h} P_{h,Y}^{-\hf}(z,s;\delta)\overline{u_j(z)}\dz \displaybreak[0] \label{spectralY} \\
&= \frac{\overline{\rho_j(-h)}}{h^\hf} \frac{1}{(2\pi h)^{s-1}}\int_{2\pi hY\inv}^{2\pi hY} y^{s-1} e^{y(1-\delta)} W_{\qtr, it_j}(2y) \dy\displaybreak[0] \notag \\
&= (2\pi)^{-(s-1)}\frac{\overline{\rho_j(-h)}}{h^{s-\hf}} M_{Y,h,\frac{1}{2}} (s,t_j,\delta),\notag
\end{align}
where the function $M_{Y,h,\frac{1}{2}}(s,t_j,\delta)$ is defined by context and is described in more detail in the appendix and in \cite{Tom}. When $\Re s > \hf+\max_{t_j} \!|\Im(t_j)|$, Proposition \ref{ybound} lets us take the limit as $Y \to \infty$ inside the sum over $u_j$ in \eqref{spectralP}, so that the $M_{Y,h,\frac{1}{2}}(s,t_j,\delta)$ become $M_{\hf}(s,t_j,\delta)$. 

We similarly compute 
\begin{align}
\langle P_{h,Y}^{-\hf}(*,s;\delta), \Im(*)^\qtr \overline{\theta}\rangle &= \iint_{\Gamma_\infty \backslash \Gamma_0(4)} P_{h,Y}^{-\hf}(z,s;\delta) \theta(z)  y^\qtr \dz \label{thetaspec} \\
& \notag  = \int_{Y^{-1}}^Y \int_0^1 y^{s-\frac{3}{4}}e^{-2\pi i h x}e^{2\pi h y (1-\delta)}\! \! \lt( \sum_{n=0}^\infty \tau(n)e^{2\pi i n z} \rt) \!  \d x\dy \\
& \notag = \frac{\tau(h)}{(2\pi \delta h)^{s-\frac{3}{4}}} \int_{2\pi h \delta Y^{-1}}^{2\pi h \delta Y} y^{s-\frac{3}{4}}e^{-y} \dy,
\end{align}
which, for $\Re(s)>3/4$, uniformly converges as $Y \to \infty$ to 
\[
 \frac{\tau(h)}{(2\pi \delta h)^{s-\frac{3}{4}}} \Gamma(s-\tfrac{3}{4}).
\]

We use the Fourier coefficients of Eisenstein series to compute the inner product of $P$ and the Eisenstein series. By untiling we get that
\begin{align*}
\langle P_{h,Y}^{-\hf}&(*,s;\delta), E_{\mathfrak{a}}^{-\hf}(*,u)\rangle =\int_{Y\inv}^Y \int_0^1 y^s e^{-2\pi h x} e^{2\pi hy(1-\delta)}\overline{E_{\mathfrak{a}}^{-\hf}(z,u)} \dz.
\end{align*}
Thus when $\Re s > \hf + |\Re u -\hf|$, we have that
\begin{align}
\lim_{Y \to \infty} & \langle P_{h,Y}^{-\hf}(*,s;\delta), E_{\mathfrak{a}}^{-\hf}(*,u)\rangle  \label{continuousY}\\
&=  i^{\hf} \frac{\pi^{u} h^{u-1}}{\Gamma(u+\qtr)} D_\mathfrak{a}(u;h) (2\pi h)^{1-s} M_\hf (s,\tfrac{1}{i}(u-\thf), \notag\delta),
\end{align}
and Proposition \ref{ybound} allows us to take this limit through the integral with respect to $u$ in \eqref{spectralP}.

The spectral expansion of the Poincar\'{e} series, given in \eqref{spectralP}, can now be used to complete our computation of the asymptotic expansion given in \eqref{asymptote1} by allowing us to compute
\[
I:=\lim_{\delta \to 0} \left(\frac{1}{2\pi i}\right)^2 \iint\limits_{(\frac 32)(3)} \lim_{Y \to \infty} \langle P, V_{w_1-w_2} \rangle \Gamma \lt(\tfrac{w_1+w_2+1}{2}\rt) (16\pi)^{\frac{w_1+w_2-1}{2}} X^{w_1+w_2} \,dw_1 \,dw_2.
 \]
Indeed, following from \eqref{spectralP} we have
\begin{align}\label{spectralExpansion}
\langle P,V_v\rangle =& \sum_j \langle P,u_j\rangle \langle u_j, V_v \rangle + \langle P, \Im(*)^{\qtr}\theta\rangle \langle  \Im(*)^{\qtr}\theta , V_v\rangle \\
&+ \frac{\mathcal{V}}{4\pi}\sum_{\mathfrak{a} =  \infty,0,\hf} \int_{-\infty}^\infty \langle P, E_{\mathfrak{a}}(*,\thf+it) \rangle \langle E_\mathfrak{a}(*,\thf+it), V_v\rangle \  \d t. \notag
\end{align}


Now we can combine all the computations for the inner products of $P$ with various eigenfunctions. Summarizing the results, we have 
\begin{align}
&\lim_{Y \to \infty} \langle P_{h,Y}(*,s,\delta), u_j\rangle = (2\pi)^{-(s-1)}\frac{\overline{\rho_j(-h)}}{h^{s-\hf}} M_{\frac{1}{2}} (s,t_j,\delta) \label{ywentpuj},\\
&\lim_{Y \to \infty } \langle P_{h,Y}(*,s,\delta) , \Im(*)^{\qtr}\theta \rangle =  
 \frac{\tau(h)}{(2\pi \delta h)^{s-\frac{3}{4}}} \Gamma(s-\tfrac{3}{4})\label{ywentTheta},
 \end{align}
 and
 \begin{align}
 &\lim_{Y\to \infty }\langle P_{h,Y}(*,s,\delta) , E_{\mathfrak{a}}(*,u) \rangle = \frac{ i^{\hf}\pi^{u} h^{u-1}}{\Gamma(u+\qtr)} D_\mathfrak{a}(u;h) (2\pi h)^{1-s} M_\hf (s,\tfrac{1}{i}(u-\thf), \delta).
\end{align} 
Since an Eisenstein series is a component of each of the summands of $V_v$,  we can compute $\langle u_j, V_v \rangle$ by general unfolding techniques to get that, for $\re v > \hf$,  
\begin{equation} \label{unfolded}
\langle u_j, V_v \rangle \ll_{\Re v}[(1+|t_j + \Im(\tfrac{v}{2})|)(1+|t_j - \Im(\tfrac{v}{2})|)]^{\frac{\Re v}{2} - \frac{1}{4}}e^{-\frac{\pi}{2}||t_j|-|\!\Im (\frac{v}{2})||}.
\end{equation} 
We can similarly treat with the other inner products of $V_v$ in \eqref{spectralExpansion}. Using Proskurin's generalization of the Kuznetsov trace formula, as referenced in \cite{Duke}, we can show that we have $\rho_j(-h) \ll_h e^{\frac{\pi}{2}|t_j|}(1+|t_j|)^\vep$ on average when we sum over $t_j$s. This along with line \eqref{prop1} of Proposition \ref{props} gives us that this series \eqref{spectralExpansion} converges locally normally as $Y \to \infty$ for all $s \in \mathbb{C}$ away from the poles. We note, however, that the presence of the $\delta^{-A}$ term in \eqref{prop1} prevents us from taking the limit as $\delta \to 0$.


We will now proceed to substitute the spectral expansion for $\langle P,V_v \rangle$ for $I$ in \eqref{mellin}, along with the variable substitutions $s = \frac{w_1+w_2}{2}+\qtr$ and $v = w_1 - w_2$, and from this we will obtain the following propositon. As a corollary, we get the smoothed sum asymptotic in Theorem \ref{boxtheorem}.

\begin{prop}
For the $I$ defined as in \eqref{mellin}, we have that
\[
I =\tau(h)cX+ O(X^{\hf}),
\]
for some computable constant $c$ which is independent of $h$.
\end{prop}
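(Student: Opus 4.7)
My plan is to substitute the spectral decomposition \eqref{spectralExpansion} of $\langle P, V_v\rangle$ into the definition of $I$, producing three pieces $I = I_{\text{cusp}} + I_{\text{res}} + I_{\text{cont}}$ corresponding to the Maass form sum, the theta residual term, and the Eisenstein integral. Among these, only $I_{\text{res}}$ carries a factor of $\tau(h)$ explicitly, so it should produce the main term $\tau(h) c X$; the other two pieces will be shown to contribute $O(X^{\hf})$ by shifting both contours inward and using decay estimates.

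For $I_{\text{res}}$: by \eqref{ywentTheta} we have $\langle P, \Im(*)^{\qtr}\theta\rangle = \tau(h)(2\pi\delta h)^{\frac{3}{4}-s}\Gamma(s-\tfrac{3}{4})$, so $I_{\text{res}}$ is a double Mellin integral carrying the prefactor $\delta^{3/4-s}$. This prefactor vanishes as $\delta\to 0$ whenever $\Re s < \tfrac{3}{4}$ and is finite at $s=\tfrac{3}{4}$. I would shift the $s$-contour from $\Re s=\tfrac{3}{2}$ leftward past the simple pole of $\Gamma(s-\tfrac{3}{4})$ at $s=\tfrac{3}{4}$; the shifted integral drops out in the $\delta\to 0$ limit, leaving only this residue, which has the form
\[
\tau(h) X \cdot \frac{1}{2\pi i}\int_{(3)} \langle \Im(*)^{\qtr}\theta, V_v\rangle\, G(v)\, dv
\]
for an explicit $G(v)$ built out of gamma and zeta factors and independent of $h$. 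Since $\langle \Im(*)^{\qtr}\theta, V_v\rangle$ is independent of $h$, shifting the $v$-contour and computing residues via the known Fourier expansion of $V_v$ (using the $\zeta^*(v)$, $\zeta^*(1+v)$ and $K$-Bessel data recorded in Section \ref{automorphic}) yields $\tau(h) c X$ for a constant $c$ independent of $h$.

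For $I_{\text{cusp}}$: each summand carries the factor $(2\pi)^{-(s-1)}\overline{\rho_j(-h)}\,h^{\hf-s}\,M_{\hf}(s,t_j,\delta)$, and by Proposition \ref{props} the poles of $M_{\hf}(s,t_j,\delta)$ sit at $s=\tfrac{3}{4}\pm it_j$ and $s=\qtr\pm it_j$. Using the absence of exceptional eigenvalues established in Section \ref{spectral}, all $t_j$ are real, and these poles lie outside the open strip $\qtr < \Re s < \tfrac{3}{4}$. I would shift the $s$-contour to $\Re s=\hf$, whereupon $X^{2s-\hf}$ contributes exactly $X^{\hf}$; the remaining double integral is bounded by combining the rapid Stirling decay of $\Gamma(s+\qtr)$, the polynomial decay of $\langle u_j, V_v\rangle$ in $v$ (noted in the paragraph following \eqref{spectralExpansion}), and standard spectral-large-sieve bounds on averages of $\rho_j(-h)\overline{\langle u_j,V_v\rangle}$, exactly as in the weight-zero setting of \cite{Jeff}. $I_{\text{cont}}$ is handled in parallel: shift $\Re s$ to $\hf$, and use the meromorphic formula \eqref{continuousY} for the Eisenstein inner product, together with the decay of $\langle E_\mathfrak{a}(*,\hf+it), V_v\rangle$ in both $t$ and $v$.

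The principal obstacle is bookkeeping the residues in the shifted $v$-contour for $I_{\text{res}}$: one must verify that the only surviving contribution, after crossing the poles of $\zeta^*(v)$, $\zeta^*(1+v)$ and the gamma factors in $G(v)$, is a single constant multiple of $X$ and not, for example, an additional $X\log X$ term (which would have to cancel against something downstream to be consistent with Theorem \ref{mainthm}). A secondary difficulty is obtaining the uniform estimates in $t_j$, $t$, and $v$ needed to justify exchanging spectral summation/integration with the outer $s,v$ Mellin integrals and to produce the clean $O(X^{\hf})$ bound for $I_{\text{cusp}}+I_{\text{cont}}$.
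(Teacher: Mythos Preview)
Your decomposition $I=I_{\text{cusp}}+I_{\text{res}}+I_{\text{cont}}$ and your treatment of $I_{\text{res}}$ are essentially the same as the paper's. (One minor point: you do not need to shift the $v$-contour for $I_{\text{res}}$ at all; the constant $c$ is simply the convergent integral $2\pi^{1/4}\int_{(3/2)}\langle \theta(z)y^{1/4},V_v\rangle\,dv$, so your ``principal obstacle'' about bookkeeping $v$-residues is a non-issue.)

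There is, however, a genuine error in your handling of $I_{\text{cusp}}$ and $I_{\text{cont}}$. By Proposition~\ref{props} the polar lines of $M_{\hf}(s,z/i,\delta)$ are at $s-\tfrac12\pm z\in\Z_{\le 0}$, so with $z=it_j$ the rightmost poles are at $s=\tfrac12\pm it_j$, not at $s=\tfrac34\pm it_j$ and $s=\tfrac14\pm it_j$ as you assert. Thus the strip $\tfrac14<\Re s<\tfrac34$ is \emph{not} pole-free, and your proposed shift of the $s$-contour to $\Re s=\tfrac12$ runs directly through the poles. The paper instead shifts past these poles to $\Re s=\tfrac38$ and picks up the residues; each residue contributes $X^{2(\frac12\pm it_j)-\frac12}=X^{\frac12\pm 2it_j}$, which is $O(X^{1/2})$ precisely because the $t_j$ are real (no exceptional eigenvalues). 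The same issue arises in $I_{\text{cont}}$: with $u$ on the line $\Re u=\tfrac12$, the poles of $M_{\hf}(s,\tfrac1i(u-\tfrac12),\delta)$ sit at $s=u$ and $s=1-u$, both on $\Re s=\tfrac12$. The paper first deforms the $u$-contour to $\Re u=\tfrac58$ to separate these, then shifts $s$ to $\Re s=\tfrac{7}{16}$, collecting the residue at $s=u$; after moving $u$ back to $\Re u=\tfrac12$ this residue integral is $O(X^{1/2})$ and the shifted $s$-integral is $O(X^{3/8})$.

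In short, the $O(X^{1/2})$ for the non-residual pieces comes from the \emph{residues} at $s=\tfrac12\pm it_j$ (resp.\ $s=u$), not from a pole-free contour shift to $\Re s=\tfrac12$.
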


\begin{proof}
We begin by dealing with each component of the spectral expansion separately. By substituting \eqref{ywentpuj} into the cuspidal part of  \eqref{spectralExpansion}, we get the discrete part of the spectrum,
\begin{align*}
I_{\text{cusp}}&:= \lim_{\delta \to 0} \sum_{j} \left(\frac{1}{2\pi i}\right)^2 \mkern-9mu \iint\limits_{(\frac 32)(3)}\! \!\! \lim_{Y \to \infty}\!\! \langle P_{h,Y}(*,\tfrac{w_1+w_2+\hf}{2},\delta), u_j\rangle\langle u_j,\! V_{w_1-w_2}\rangle \\[-2ex]
&\mkern250mu \times  \Gamma \!\lt(\tfrac{w_1+w_2+1}{2}\rt) \! (16\pi)^{\frac{w_1+w_2-1}{2}} X^{w_1+w_2} \,dw_1 \,dw_2 \\
&=  \lim_{\delta \to 0}\sum_{j} \left(\frac{1}{2\pi i}\right)^2 \mkern-9mu \iint\limits_{(\frac 32)(3)}\frac{\pi^\qtr \overline{\rho_j(-h)}}{ 2^{\frac{5}{4}-\frac{3w_1+3w_2}{2}}h^{\frac{w_1+w_2-\hf}{2}}} M_{\frac{1}{2}} (\tfrac{w_1+w_2+\hf}{2},t_j,\delta) \langle u_j, V_{w_1-w_2}\rangle \\
&\mkern350mu \times \Gamma \lt(\tfrac{w_1+w_2+1}{2}\rt) 
 \! X^{w_1+w_2} \,dw_1 \,dw_2.
\end{align*}
To be able to take the limit as $\delta \to 0$, we need to shift $\Re(w_1+w_2)$ sufficiently far to the left so that \eqref{ll1} and \eqref{ll2} of Proposition \ref{props} of the appendix allow the sum over $t_j$ to converge independently of $\delta$. So moving the line of integration for $w_2$ to $\Re w_2 = -\frac{3}{4}$ and then doing the same for $w_1$ to $\Re w_1 = -\qtr + \vep$, we pass over simple poles at $w_1 = \hf \pm 2it_j - w_2$ and the shifted integral resulting from $I_{\mbox{\tiny cusp}}$ is $O(X^{-1+ \vep})$. This follows from the same reasoning that demonstrated the convergence of \eqref{spectralExpansion} as $Y \to \infty$.

For the residual terms, we use \eqref{resfin} of Proposition \ref{props} to compute the residues of $M_{\hf}(s,t_j,\delta)$. Letting $\delta \to 0$ we get these become,
\begin{align} \label{sumjs}
  &\sum_{\pm t_j} \left(\frac{1}{2\pi i}\right)^2 \mkern-9mu \int\limits_{(-\frac{3}{4}-2\vep)}\frac{\pi^\qtr \overline{\rho_j(-h)}}{ 2^{-2it_j}h^{it_j}} \frac{2\Gamma(2it_j)}{\Gamma(\qtr+it_j)} \langle u_j, V_{\hf+2it_j-2w_2}\rangle \Gamma \lt(\tfrac{3}{4}+it_j\rt) 
 \! X^{\hf+2it_j} \,dw_2.
\end{align}
The Shimura correspondence for Maass forms, as investigated by Sarnak and Goldfeld in \cite{Sarnak}, gives that each weight-$\hf$ Maass form on $\Gamma_0(4)$ and eigenvalue $t_j$ lifts to a weight-zero Maass form on $\Gamma_0(4)$ with eigenvalue $2t_j$. Since there are no exceptional eigenvalues for weight zero and level two, as is computed explicitly in \cite{LMFDB}, there are no exceptional eigenvalues for weight-$\hf$ and level four. Hence \eqref{sumjs} is $O(X^\hf)$.

 A similar approach is applicable to the continuous spectrum and yields the same overall bound of $O(X^\hf)$.

All that remains is the residual spectrum:
\begin{align*}
I_{\text{res}}&:=\lim_{\substack{Y \to \infty\\\delta \to 0}} \left(\frac{1}{2\pi i}\right)^2 \iint\limits_{(\frac 32)(3)} \langle P_{h,Y}(z,\tfrac{w_1+w_2+\hf}{2},\delta), \overline{\theta(z)}y^\qtr \rangle \langle \overline{\theta(z)}y^\qtr,V_{w_1-w_2} \rangle \\[-3ex]
&\bquad\aquad \times \Gamma \lt(\tfrac{w_1+w_2+1}{2}\rt) (16\pi)^{\frac{w_1+w_2-1}{2}} X^{w_1+w_2} \,dw_1 \,dw_2\\ 
&= \lim_{\delta \to 0} \left(\frac{1}{2\pi i}\right)^2 \iint\limits_{(\frac 32)(3)} \frac{\tau(h)}{(2\pi\delta h)^{\frac{w_1+w_2-1}{2}}} \Gamma(\tfrac{w_1+w_2-1}{2}) \langle \overline{\theta(z)}y^\qtr,V_{w_1-w_2} \rangle \\[-3ex]
&\bquad\aquad \times \Gamma \lt(\tfrac{w_1+w_2+1}{2}\rt) (16\pi)^{\frac{w_1+w_2-1}{2}} X^{w_1+w_2} \,dw_1 \,dw_2.
\end{align*}
We can shift the line of integration of $w_2$ to $\Re w_2 = \vep$ without passing over poles. Moving the line of integration of $w_1$ to $\Re w_1 = \hf $, past the pole at $w_1=1-w_2$, and taking $\delta \to 0$ we get that the shifted integral vanishes and the residual term is
\begin{align*}
I_{\text{res}}&= \frac{1}{2\pi i} \int\limits_{(\vep)} 2\tau(h) \langle \overline{\theta(z)}y^\qtr,V_{1-2w_2} \rangle \,dw_2 \cdot X =: c\tau(h)X.
\end{align*}
This completes the proof of the proposition.
\end{proof}

\section{Sums over hyperbolic regions}
In this section, we discuss how the methods given above can be modified to obtain  the results for the hyperbolic-shaped region of Theorem \ref{hyperbolictheorem}. Since the techniques used are very similar, we only explain the differences and omit technical details.
%
%

The target sum can be rewritten as
\[ \sum_{\substack{a,c \\ 1 \leq ac \leq X^2}} \tau(4ac + h) = \sum_{m \leq X^2} \tau(4m+h)\sigma_0(m). \]
This can be obtained via Perron's formula:
\[ \sum_{m \leq X^2} \tau(4m+h)\sigma_0(m) = \frac{1}{2\pi i} \int_{1+\vep-iT}^{1+\vep+iT} \sum_{m \geq 1} \frac{\tau(4m+h)\sigma_0(m)}{m^s} X^{2s} \,\frac{ds}{s} +  O\lt(\frac{X^{1+\vep}}{T}\rt). \]
The Dirichlet series above is extremely close to the one in \eqref{dseries} that we used for the smooth sum estimate, with the major difference that $v=0$. It is natural to attempt setting $v=0$. However, the troubles are that $V_v$ is not square-integrable at $v=0$ and that $V_v$ has poles at $v=0$. The fix to the problem is to replace the original $V_v$ with the following function,
\begin{align*}
  \overline{V_0} = &E^*(4z,\hf) \theta(z)y^{\qtr} \\
   &- \operatorname{Const}_{v=0} [4^{\frac{1+v}{2}} \zeta^*(1+v) E_\infty^\hf(z,\tfrac{3}{4} + \tfrac{v}{2}) - e^{-\frac{i\pi}{4}} \zeta^*(1+v) E_0^\hf(z,\tfrac{3}{4}+\tfrac{v}{2})] \\
   &- \operatorname{Const}_{v=0} [4^{\frac{1-v}{2}} \zeta^*(1-v) E_\infty^\hf(z,\tfrac{3}{4} - \tfrac{v}{2}) - e^{-\frac{i\pi}{4}} \zeta^*(1-v) E_0^\hf(z,\tfrac{3}{4}-\tfrac{v}{2})].
\end{align*}
From this point onwards, most calculations are similar to the smoothed sum case, barring some additional calculations necessary on differentiating $M_k(s,\frac{z}{i},\delta)$ with respect to the $z$ variable as well as bounding the corresponding $\mathfrak{D}_0(s;h)$ when $\Re s = \hf + \vep$. By choosing an optimal $T$, we are able to arrive at the error bound of $O(X^{\frac{4}{5}+\vep})$.


\begin{appendix}
\section{}
Consider the functions 
\beq
M_{Y,h,k}(s,z/i,\delta):=\int_{Y^{-1}2\pi h}^{Y2\pi h}  y^{s-1}e^{y(1-\delta)}W_{{\frac{k}{2}},z}(2y) \frac{dy}{y}.
\eeq 
and
\beq
M_{k}(s,z/i,\delta):=\int_{0}^\infty y^{s-1} e^{y(1-\delta)} W_{\frac{k}{2}, z}(2y) \dy
\eeq
for $s,z\in \C$, $k\in\R$, $Y \gg 1$, $h \in \mt{Z}_{\geq 1}$ and small $\delta>0$. These functions have been thoroughly investigated in \cite{Tom} by the first author, further generalizing a construction first studied in \cite{Jeff}.  The following two propositions summarize the relevant properties of these functions used in this work, and their proofs can be found in \cite{Tom}.
\begin{prop} \label{ybound}
Let
\beq
M_{Y,h,k}(s,z/i,\delta):=\int_{Y^{-1}2\pi h}^{Y2\pi h}  y^{s-1}e^{y(1-\delta)}W_{{\frac{k}{2}},z}(2y) \frac{dy}{y}.
\eeq 
For fixed $\vep>0, Y \gg 1, $ $1 > \delta >0$, and $A \in \mt{Z}_{\geq 0}$, we have that for $\Re(s)>\hf+|\Re (z)| +\vep$
\begin{align}
|M_{Y,h,k}(s,z/i,\delta)-&M_k(s,z/i,\delta)| \label{ygrowth}
 \\
 & \ll \frac{e^{-Y2\pi h \delta}(Yh)^{\Re s +\frac{k}{2}+A+\vep-2}}{\delta(1+|\Im z|)^A}+\frac{(Y^{-1}h)^{\Re s-\hf-|\Re z|-\vep}}{(1+|\Im z|)^A} \notag
 \end{align} 
 where the implied constant is dependent on $A$, $k$, $\Re(s)$, $\Re(z)$  and $\vep$ . 
\end{prop}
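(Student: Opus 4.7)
The plan is to split the difference into two tail integrals,
\[
M_k(s,z/i,\delta) - M_{Y,h,k}(s,z/i,\delta) = \int_0^{Y^{-1}2\pi h} + \int_{Y 2\pi h}^{\infty},
\]
of the common integrand $y^{s-2}e^{y(1-\delta)}W_{k/2,z}(2y)\,dy$, bound each tail separately using the standard asymptotics of the Whittaker function, and then extract the polynomial decay in $|\Im z|$ by iterated use of the Whittaker differential equation.

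For the tail at infinity I would use the large-argument asymptotic $W_{k/2,z}(2y) = (2y)^{k/2}e^{-y}(1+O(1/y))$, uniform for $z$ in bounded strips. This bounds the integrand by $\ll y^{\Re s + k/2 - 2}e^{-\delta y}$, and the standard estimate $\int_T^{\infty} y^{\sigma} e^{-\delta y}\,dy \ll_{\sigma} T^{\sigma} e^{-T\delta}/\delta$ applied at $T = Y2\pi h$ produces a contribution of size $\ll (Yh)^{\Re s + k/2 - 2}e^{-Y 2\pi h \delta}/\delta$, matching the first summand of \eqref{ygrowth} except for the $(Yh)^{A+\vep}$ and $(1+|\Im z|)^{-A}$ factors. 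For the tail near zero I would apply $|W_{k/2,z}(2y)| \ll y^{\hf - |\Re z|-\vep}$ (valid uniformly for $z$ in strips away from the exceptional values where the gamma factors in the near-zero expansion of the Whittaker function misbehave), which bounds the integrand by $y^{\Re s - \frac{3}{2} - |\Re z|-\vep}$; integration on $[0,Y^{-1}2\pi h]$ yields $\ll (Y^{-1}h)^{\Re s - \hf - |\Re z| - \vep}$, recovering the second summand of \eqref{ygrowth}.

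To obtain the $(1+|\Im z|)^{-A}$ decay I would use the Whittaker differential equation. Setting $\widetilde W(y) := W_{k/2,z}(2y)$, one has
\[
z^2\,\widetilde W(y) = y^2\,\widetilde W''(y) + (ky - y^2 + \tfrac{1}{4})\,\widetilde W(y),
\]
so $\widetilde W = z^{-2}\bigl[y^2 \widetilde W'' + (ky - y^2 + \tfrac{1}{4})\widetilde W\bigr]$. Substituting this identity into the integrand and integrating by parts twice to shift the derivatives off $\widetilde W$ produces a factor of $|z|^{-2}$ together with boundary terms at the endpoints and modified polynomial prefactors of bounded degree in $y$. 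Iterating this procedure a suitable number of times yields a factor of $|z|^{-A}$ or better, which dominates $(1+|\Im z|)^{-A}$ for all $z$ in the relevant strips; the price is additional polynomial growth in $y$ of degree $O(A)$, which is exactly what inflates the exponent in the first summand of \eqref{ygrowth} by $A+\vep$ when combined with the tail estimate above.

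The main obstacle will be the bookkeeping after the $A$-fold integration by parts: each of the new bulk integrands must be shown to satisfy tail bounds of the same shape uniformly in $Y$, $h$, $\delta$, $\Re s$, and $\Re z$, and the boundary terms generated at $Y^{-1}2\pi h$ and $Y 2\pi h$ must be controlled so as to be absorbable into the same tail estimates (in particular the $e^{-Y 2\pi h \delta}$ factor must survive the differentiations). Once this accounting is in place, combining the near-zero and infinity tail bounds, now carrying the desired polynomial decay in $|\Im z|$, yields \eqref{ygrowth}.
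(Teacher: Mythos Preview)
The paper does not prove this proposition in the text; the appendix states both Proposition~\ref{ybound} and Proposition~\ref{props} and immediately refers the reader to \cite{Tom} for the proofs. So there is no in-paper argument to compare your outline against.

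Your plan is the natural one and is sound in outline. Two remarks that may sharpen the execution. First, for the tail near zero the $(1+|\Im z|)^{-A}$ is already present in $W_{k/2,z}$ itself: the connection formula
\[
W_{k/2,z}=\frac{\Gamma(-2z)}{\Gamma(\tfrac12-z-\tfrac{k}{2})}M_{k/2,z}+\frac{\Gamma(2z)}{\Gamma(\tfrac12+z-\tfrac{k}{2})}M_{k/2,-z}
\]
combined with Stirling gives coefficients of size $(1+|\Im z|)^{(k-1)/2}e^{-\pi|\Im z|/2}$, so no differential-equation trick is needed on that side, and this is why the exponent of $Y^{-1}h$ in \eqref{ygrowth} carries no $A$. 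Second, for the tail at infinity there is a cancellation worth noting: after one application of your identity the top-degree coefficient in $y$ is $(1-\delta)^2-1=O(\delta)$, so the effective degree climbs by only one per iteration rather than two, which is consistent with (indeed slightly better than) the $+A$ in the stated exponent.

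The genuine technical point you will have to nail down is that the one-term asymptotic $W_{k/2,z}(2y)\sim(2y)^{k/2}e^{-y}$ is not uniform in $z$ once $y\lesssim |\Im z|^2$; bounding the post-IBP bulk integrals and the boundary terms at $y=Y2\pi h$ therefore requires either a uniform pointwise bound on $W_{k/2,z}$ and its derivative (e.g.\ via the integral representation) or a separate treatment of that transitional regime. This is exactly the ``bookkeeping'' you flag, and it is where the real work lies.
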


\begin{prop} \label{props} Fix small $\vep>0$ and $\delta>0,$  and let $k \in \mt{R}$. Furthermore let $s=\sigma+ir$ where $\sigma,r \in \mt{R}$ and $\Im(z) =t$. The function $M_k(s,z/i,\delta)$ has a meromorphic continuation to all $(s,z)\in\mt{C}^2$ with simple polar lines at the points $s-\hf\pm z\in \mt{Z}_{\leq 0}$ when $\hf - \kt\pm z \notin \mt{Z}_{\leq 0}$. For fixed $z\notin \hf \mt{Z}$, the residues at these points are given by
\bal
 \label{resfin} \ \ \ \stackrel[s=\hf-\ell\pm z]{}{\mbox{\emph{Res}} }M_k(s,z/i,\delta) =&\frac{(-1)^\ell2^{\hf+\ell\mp z}\Gamma(\hf \mp z-\kt+\ell)\Gamma(\pm 2z-\ell)}{\ell! \Gamma(\hf-\kt + z)\Gamma(\hf-\kt-z)}\\
& \notag +\mc{O}_{\ell,\re z}\lt((1+|t|)^{\ell+\kt-\hf-\re z -b}e^{-\frac{\pi}{2}|t|}\delta  \rt) 
\end{align}
where $\ell \in \mt{Z}_{\geq 0}$ and $b< \min(-1,\hf-\sigma-\Re(z),-2\Re(z))$. If $\ell\pm 2z \in \mt{Z}_{\geq 0}$ then $ M_k(s,z/i,\delta)$ has a double pole at $s=\hf-\ell \mp z$. 


  
  For $s$ and $z$ at least a distance of $\vep>0$ from the poles, for any $A  \in \mt{R}$, independent of $\delta$, $r$, and $t$, such that $A> 1+|\sigma| + |\Re(z)|+|\tkt|$ and
\beq
M_k(s,z/i,\delta) \ll_{A,\vep} \delta^{-A}(1+|t|)^{2\sigma - 2 - 2A+k}(1+|r|)^{9A}e^{-\frac{\pi}{2}|r|}.
\label{prop1}
\eeq
 For $\sigma < 1-\kt-\vep_0$ and $s$ at least a distance of $\vep$ away from the poles of $M_k(s,z/i,\delta)$ and $\delta(1+|t|)^2 \leq 1$ we have 
 \bal 
 \label{ll1} M_k(s,z/i,\delta)&=\frac{2^{1-s}\Gamma(s-\frac{1}{2}-z)\Gamma(s-\hf+z)\Gamma(1-s-\kt)}{\Gamma(\frac{1}{2}-\frac{k}{2}+z)\Gamma(\hf-\kt-z)} \\
& \notag + \mc{O}_{A,b,\vep_0} \lt( (1+|t|)^{2\sigma -2+k+2\ep}(1+|r|)^{9A-2b}e^{-\frac{\pi}{2}|r|}\delta^{\vep_0} \rt) 
 \end{align}
while for $\delta(1+|t|)^2>1$ we have
 \beq
M_k(s,z/i,\delta) \ll_{A,\vep} (1+|t|)^{2\sigma - 2 +k}(1+|r|)^{9A}e^{-\frac{\pi}{2}|r|}.
\label{ll2}
\eeq

\end{prop}
\end{appendix}

\bibliography{bibfile}	
\bibliographystyle{alpha}	
\end{document}